\newtheorem{theorem}{Theorem}[section]
\newtheorem{corollary}[theorem]{Corollary}
\newtheorem{proposition}[theorem]{Proposition}
\theoremstyle{definition}
\newtheorem{definition}[theorem]{Definition}
\newtheorem{remark}[theorem]{Remark}
\numberwithin{equation}{section}
\numberwithin{equation}{section}
\title{ON LACUNARY STATISTICAL CONVERGENCE AND LACUNARY STRONG
CESÀRO CONVERGENCE BY MODULI}
\author{María del Pilar Romero de la Rosa}
\address{M.P. Romero de la Rosa, University \& Departamento de Matem\'aticas, Universidad de C\'adiz, Avenida de la Universidad,   11405    Jerez de la Frontera, C\'adiz (Spain)}
\email{pilar.romero@uca.es}
\date{November 2022}
\date{\today}
\subjclass{40H05;  40A35}
\keywords{Lacunary convergence; Statistical convergence ; Strong-Ces\`aro convergence; Modulus function }
\begin{document}

\maketitle
\begin{abstract}
    Here we fully complete the studies initiated by Vinod K. Bhardwaj and Shweta Dhawan in \cite{hindawi} which relate different convergence methods which involves the classical statistical and the classical strong Cesàro convergences by means of lacunary sequences and measures of density in $\mathbb{N}$ modulated by a modulus function $f$.
\end{abstract}
\section{Introduction}
In Applied Science, sometimes, limit processes do no fit to the usual concept of limit. However, for applications, in most cases it is sufficient to consider a different form of convergence. This paradigm, appears in Mathematics for the first time with the study of the convergence of the Fourier series, sometimes they do not converge to a continuous function, but their partial sums always converge in the Cesàro sense.
This is precisely what is known as   summability methods or  convergence methods, an alternative formulation of convergence. Since then, a theory has been developed that has interest in its own right, moreover, this is a very active field of research with many contributors (see \cite{mursaleenbook}).

For instance, we say a sequence $(x_n)\subset X$  is said to be statistically convergent to $L$ if for any $\varepsilon$ the subset $\{n\,:\, \|x_n-L\|>\varepsilon\}$ has zero density on $\mathbb{N}$. On the other hand a sequence $(x_k)$ on a normed space $(X,\|\cdot\|) $ is said to be strong Ces\`aro convergent to $L$ if
$
\lim_{n\to \infty} \frac{1}{n}\sum_{k=1}^n \|x_k-L\|=0
$. These seems to be the first examples of convergence methods. 
The term statistical convergence was firstly presented by Fast \cite{Fast} and Steinhaus \cite{Stein} independently in the same year 1951.
And the strong  Ces\`aro convergence for real numbers was introduced by Hardy-Littlewood \cite{hardy} and Fekete \cite{fekete} in connection with the convergence of Fourier series.
It is a little bit surprising how the above  convergence methods, discovered in different times and which were developed independently, are basically the same concept thanks to a result discovered by Jeff Connor \cite{connor88}.

In many cases, applied problems required different convergence formulations and from a theoretical point of view it is very natural to ask if an alternative convergence method has really been introduced or on the contrary it is the same as the usual one. Or what relationship exists between a new method and the existing ones. In this sense, we see here that applications are a good source for good mathematical problems.

Strong convergence modulated by a modulus function $f$ ($f$-strong Cesàro convergence) were presented by Nakano \cite{nakano} in 1953. 
And independently after many years, Aizpuru  et al. \cite{aizpuru} presented a form of density on the set of  natural numbers, modulated also by a modulus function  $f$, from which can be defined  the concept of  $f$-statistical convergence. Since then, a great effort has been made to link the two types of convergence methods. One of the main contributions is due to V. K. Bhardwaj and S. Dhawan \cite{jia0}. In  \cite{jia} we fully solve several questions in connection with these convergence methods. For instance, we characterized  the modulus functions $f$ for which the $f$-statistically convergence and the statistically convergence are equivalents. Namelly, when $f$ is compatible (see Definition \ref{compatible} below).

More later, V. K. Bhardwaj and S. Dhawan \cite{hindawi}, 
 continue their work, relating both convergences: $f$ -statistical and $f$-strong Cesàro, in their lacunary versions, obtaining partial results. 
In this paper we solve some questions posed  by V. K. Bhardwaj and S. Dhawan \cite{hindawi}, Remark 12, p.4. Moreover, we fully characterize the relationship between the $f$-statistical convergence and the $f$-strong Cesàro convergence in their lacunary versions.

\section{Preliminaries}
\label{main}

In 1978 Freeman et al, inspired by the classical studies on Cesàro convergence by Hardy-Littlewood and Fekete, studied the concept of strong lacunary convergence in \cite{sember}.
By a lacunary sequence  $\theta=\{k_r\}_{r\geq 0}\subset \mathbb{N}$ with $k_0=0$, we will means   an increasing sequence of natural numbers such that $h_r=k_r-k_{r-1}\to \infty$ as $r\to \infty$.
Denoting the intervals $I_r=(k_{r-1},k_r]$,
a sequence $(x_n)$ on a normed space $X$ is said to be strong lacunary convergent to $L$ provided
$$
\lim_{r\to\infty}\frac{1}{h_r}\sum_{k\in I_r}^{\infty}\|x_k-L\|=0.
$$

Moreover,  they proved in \cite{sember} that this new convergence method is exactly equal to classical one introduced by Hardy-Littlewood and Fekete: strong Cesàro convergence, if and only if the ratios $q_r=\frac{k_r}{k_{r-1}}$ satisfy
$ 1 < \liminf_r q_r \leq  \limsup_r q_r < \infty$.

Later, Fridy and Orhan \cite{b5} refined these studies by relating them to a new concept: {\it lacunary statistical convergence}, where it underlies new ways of measuring the density of subsets in $\mathbb{N}$. Specifically, given a lacunary sequence $\theta=\{k_r\}$, a sequence $(x_n)$ in a normed space $X$ is said to be lacunary statistical convergent to $L$ if for any $\varepsilon >0$ the subset $A_\varepsilon=\{k\in\mathbb{N}\,:\,\|x_k-L\|>\varepsilon\}$ has 
$\theta$-density equal $0$. That is,
$$
d_\theta(A_\varepsilon)=\lim_{r\to\infty}\frac{1}{h_r}\{k\in I_r\,:\, \|x_k-L\|>\varepsilon\}=0.
$$

Let us recall that $f\,:\, \mathbb{R}^+\to \mathbb{R}^+$ is said to be a modulus function if it satisfies:
\begin{enumerate}
    \item $f(x)=0 $ if and only if $x=0$.
    \item $f(x+y)\leq f(x)+f(y)$ for every $x,y\in \mathbb{R}^+$.
    \item $f$ is increasing.
    \item $f$ is continuous from the right at $0$.
\end{enumerate}

To avoid trivialities, we will suppose that the modulus functions are unbounded. 

In the 1990s there has been a lot of interest in studying the classical Cesàro convergence, when it is modulated by a modulus function $f$. For instance, these studies started by Maddox \cite{maddox} and  continued later by Connor \cite{canadian}. 
If one want to find the "statistical" counterpart of the convergence methods studied by Maddox, we see that it is impossible \cite{jia}.  That is, we cannot  define a density in $\mathbb{N}$ that allows to study the convergence introduced by Maddox by means of
the statistical convergence defined by this  density.

However, many years later, Aizpuru et al. \cite{aizpuru}, introduced a concept of density for sets of $ \mathbb{N} $ modulated by a function $f$. Namelly, given $ A \subset \mathbb{N} $ the $ f $ -density is defined as:
$$
d_f(A)=\lim_{n\to \infty}\frac{1}{f(n)} f\left(\#\{k\leq n\,:\,k\in A\}\right)
$$
when this limit exists (here $\#C$ denotes the cardinal of a finite subset $C$).

As we show in \cite{jia}, a slight modification of the formula introduced by Madox gives us a concept of $f$-strong Cesàro convergence that fits as a glove to the $f$-density. And now it is possible to obtain the desired relationship between these two types of convergence methods.

A similar problem occurs with the lacunary version of the $f$-statistical convergence. Specifically, the lacunary $f$-statistical convergence introduced by Vinod K. Bhardwaj and Shweta Dhawanin \cite{hindawi} and a possible lacunary extension introduced by Pehlivan and Fisher \cite{fisher} for the $f$-strong Cesàro convergence.

The aim of this article is to explore the rich structure that exists between the lacunary statistical convergence and the strong cesàro convergence when are modulated by a modulus function $f$.
As it was defined in \cite{hindawi}, given a lacunary sequence $\theta=(k_r)$,  and a modulus function $f$, a sequence $(x_k)$ is said to be lacunary $f$-statistical convergent to $L$ if for any $\varepsilon>0$ the subset
$A_{\varepsilon}=\{k\in \mathbb{N}\,:\, \|x_k-L\|>\varepsilon\}$ has lacunary $f$-density equal to zero, that is
$$
d_{f,\theta}(A_\varepsilon)=\lim_{t\to\infty}\frac{1}{f(h_t)}f\left(\#\{k\in I_r, :\,\|z_k-L\|>\varepsilon\}\right)=0.
$$
Now, let us see that the densities have  two degrees of choice.
Next we introduce the slight modification of the lacunary $f$-strong Cesàro convergence studied by Pehlivan and Fisher.

\begin{definition}
Given a modulus function $f$ and $\theta=(k_r)$ a lacunary sequence, we say that a sequence $(x_n)$ is lacunary $f$-strong convergent to $L$ if
$$
\lim_{t\to\infty}\frac{1}{f(h_t)}f\left(\sum_{k\in I_r}\|x_k-L\|\right)=0.
$$
\end{definition}

In \cite{jia} we introduced the notion of compatible modulus function that will play a central role along the discussion.

\begin{definition}
\label{compatible}
Let us denote $\varphi(\varepsilon)=\limsup_n\frac{f(n\varepsilon)}{f(n)}$.
A modulus function  $f$ is said to be compatible if $\lim_{\varepsilon\to 0} \varphi(\varepsilon)=0$.
\end{definition}

\begin{remark}The following functions $f(x)=x^p+x^q$, $0<p,q\leq 1$, $f(x)=x^p+\log(x+1)$, $f(x)=x+\frac{x}{x+1}$ are modulus functions which are compatible. And $f(x)=\log(x+1)$, $f(x)=W(x)$ (where $W$ is the $W$-Lambert function restricted to $\mathbb{R}^+$, that is, the inverse of $xe^x$) are modulus functions which are not compatible. Indeed, let us show that $f(x)=x+\log(x+1)$ is compatible. 
$$
\lim_{n\to\infty}\frac{f(n\varepsilon')}{f(n)}=\lim_{n\to \infty}\frac{n\varepsilon'+\log(1+n\varepsilon')}{n+\log(n+1)}=\varepsilon'
$$
On the other hand  if $f(x)=\log(x+1)$, since
$$
\lim_{n\to\infty}\frac{\log(1+n\varepsilon')}{\log(1+n)}=1
$$
we obtain that $f(x)=\log(x+1)$ is not compatible.
\end{remark}

\begin{definition}
\label{thetacompatible}
Let us denote  $\varphi_{\theta}(\varepsilon)=\limsup_{t\to\infty}\frac{f(h_t\varepsilon)}{f(h_t)}$. We will say that  $f$ is $\theta$-compatible if  if $\lim_{\varepsilon\to 0}\varphi_\theta(\varepsilon)=0$. 
\end{definition}
Clearly, if $f$ is compatible then $f$ is $\theta$-compatible for any lacunary sequence $\theta$.

The paper is structured as follows. In Section \ref{secciontres} we fix a lacunary sequence $(\theta)$ and we will see the relationship between the space of sequences which are lacunary strong Cesàro convergent ($N_\theta$) and the space of lacunary $f$-strong Cesàro convergent sequences ($N_\theta^f$). 
Moreover, we see also the relationship between the spaces $S_\theta$ and $S_\theta^f$, that is, the space of lacunary statistical convergent sequences
and the space of lacunary $f$-statistical convergent sequences (respectively).

In Section \ref{cuatro} we will obtain Connor-Khan-Orhan's type results by relating the lacunary $f$ strong Cesàro convergence ($N_\theta^f$) with the lacunary $f$-statistical convergence ($S_\theta^f$).
Finally, in Section \ref{cinco} we fix a modulus function $f$ and we study the relationship between the lacunary $f$-strong Cesàro convergence and the $f$-strong Cesàro convergence, for any lacunary sequence $\theta$.

\section{Lacunary convergence vs Lacunary convergence modulated by a function $f$.}
\label{secciontres}
In this section we fix a lacunary sequence $\theta=(k_r)$ and we move the modulus functions $f$. We will see the relationship between lacunary $f$-strong Cesàro convergence and the lacunary strong Cesàro convergence.
Will we show that both convergence methods are equivalents when the modulus function $f$ is compatible.

A similar characterization is obtained for the lacunary $f$-statistical convergence and the lacunary statistical  convergence. If the modulus function $f$ is compatible both convergence methods are equivalents.

Moreover, we fully solve a question posed by V. K. Bhardwaj and S. Dhawan \cite{hindawi} Remark 12, p.4. If we denote by $S_\theta^f$ the space of all lacunary $f$-statistical convergence and by $S_\theta$ the set of all lacunary statistical convergence sequences, by applying Theorem \ref{propo1},  and Theorem \ref{reciproco} we have that $S_\theta^f=S_\theta$ if an only if $f$ {\it is $\theta$-compatible}.

Let $f$ be a modulus function and $\theta=(k_r)$ be a lacunary sequence.
Let us denote by $N^f_\theta$ the space of all lacunary $f$-strongly Ces\`aro convergent sequences and  by $N_\theta$ the space of  all lacunary strong Cesàro convergent sequences.

\begin{theorem}
\label{proposicion}
Let $f$ be a modulus function and let $\theta=(k_t)$ be a lacunary sequence. Then
$N_\theta^f\subset N_\theta$.
\end{theorem}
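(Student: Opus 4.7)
The plan is to argue by contradiction: suppose $(x_n) \in N_\theta^f$ with $f$-lacunary limit $L$, but $(x_n) \notin N_\theta$. Then there exist $\varepsilon > 0$ and an infinite sequence of indices $t_j \to \infty$ with
\[
\frac{1}{h_{t_j}} \sum_{k \in I_{t_j}} \|x_k - L\| \geq \varepsilon,
\]
equivalently $\sum_{k \in I_{t_j}} \|x_k - L\| \geq \varepsilon h_{t_j}$. The goal is to push this through the modulus $f$ and obtain a lower bound that contradicts the $N_\theta^f$-convergence.

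First I would use monotonicity of $f$ to conclude $f\bigl(\sum_{k \in I_{t_j}} \|x_k - L\|\bigr) \geq f(\varepsilon h_{t_j})$. The real work is to bound $f(\varepsilon h_{t_j})$ from below in terms of $f(h_{t_j})$, which is where subadditivity enters. Writing $m = \lceil 1/\varepsilon \rceil$, we have $m \varepsilon \geq 1$, so by monotonicity $f(h_{t_j}) \leq f(m \varepsilon h_{t_j})$, and iterating subadditivity $m$ times gives $f(m \varepsilon h_{t_j}) \leq m f(\varepsilon h_{t_j})$. Combining,
\[
f(\varepsilon h_{t_j}) \geq \frac{1}{m} f(h_{t_j}) = \frac{f(h_{t_j})}{\lceil 1/\varepsilon \rceil}.
\]

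Putting these together yields
\[
\frac{1}{f(h_{t_j})} f\!\left(\sum_{k \in I_{t_j}} \|x_k - L\|\right) \geq \frac{1}{\lceil 1/\varepsilon \rceil} > 0
\]
for infinitely many $j$, contradicting the assumption that $(x_n) \in N_\theta^f$.

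The only step that requires genuine care is the subadditivity trick producing the lower bound $f(\varepsilon x) \geq f(x)/\lceil 1/\varepsilon \rceil$; everything else is a direct application of the definitions. Notice that no compatibility hypothesis on $f$ is needed here — the inclusion $N_\theta^f \subset N_\theta$ holds for \emph{every} modulus function, which is consistent with the author's later plan to characterize the reverse inclusion via compatibility.
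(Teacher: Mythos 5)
Your proof is correct and rests on the same key inequality as the paper's: the subadditivity bound $f(\varepsilon x)\geq f(x)/\lceil 1/\varepsilon\rceil$, which the paper uses in the equivalent form $\tfrac{1}{p}f(h_t)\leq f(h_t/p)$. The only difference is that you run the argument in contrapositive form, which is arguably slightly cleaner since it never needs to pass from $f(a)\leq f(b)$ back to $a\leq b$ (a step the paper justifies by ``$f$ is increasing,'' which strictly speaking requires injectivity of $f$), whereas your version only uses monotonicity in the forward direction.
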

\begin{proof}
Indeed, for all $p\geq 1$, there exists $t_0$, such that:
$$
\frac{f\left(\sum_{k\in I_t}\|x_k-L\| \right)}{f(h_t)}\leq \frac{1}{p}
$$
for all $t\geq t_0$. That is,
$$
f\left(\sum_{k\in I_t}\|x_k-L\| \right)\leq \frac{1}{p}f(h_t) \leq f\left(\frac{h_t}{p}\right)
$$
and since $f$ is increasing, we have that $$
\sum_{k\in I_t}\|x_k-L\| \leq \frac{h_t}{p}
$$
for all $t\geq t_0$, that is, $(x_n)$ is lacunary strong Ces\`aro convergent to $L$.
\end{proof}

A similar result can be obtained for the lacunary statistical convergence.

\begin{theorem}
\label{propo1}
Let $f$ be a $\theta$-compatible modulus function and $\theta=(k_r)$ a lacunary sequence. Then $S_\theta= S_\theta^f$.
 \end{theorem}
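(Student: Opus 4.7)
The plan is to prove the two inclusions separately. Essentially, one direction mirrors the argument of Theorem \ref{proposicion} and needs nothing about $f$ beyond its being an increasing modulus, while the reverse inclusion is exactly where $\theta$-compatibility enters.

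For $S_\theta^f \subset S_\theta$, write $N_\varepsilon(t)=\#\{k\in I_t\,:\,\|x_k-L\|>\varepsilon\}$. Assume $(x_k)\in S_\theta^f$, so $f(N_\varepsilon(t))/f(h_t)\to 0$. Fix an integer $p\geq 1$; eventually $f(N_\varepsilon(t))\leq \tfrac{1}{p}f(h_t)\leq f(h_t/p)$ by subadditivity of $f$, and since $f$ is increasing we deduce $N_\varepsilon(t)\leq h_t/p$. Thus $N_\varepsilon(t)/h_t\to 0$, which is precisely lacunary statistical convergence. This step is a direct transcription of the argument in Theorem \ref{proposicion} applied to counts rather than to the sum $\sum_{k\in I_t}\|x_k-L\|$, and does not yet require $\theta$-compatibility.

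For $S_\theta\subset S_\theta^f$, assume $N_\varepsilon(t)/h_t\to 0$. Fix an arbitrary $\varepsilon'>0$; then for $t$ large enough, $N_\varepsilon(t)\leq \varepsilon' h_t$, so by monotonicity of $f$
$$
\frac{f(N_\varepsilon(t))}{f(h_t)}\leq \frac{f(\varepsilon' h_t)}{f(h_t)}.
$$
Taking $\limsup$ in $t$ gives $\limsup_t f(N_\varepsilon(t))/f(h_t)\leq \varphi_\theta(\varepsilon')$. Now let $\varepsilon'\to 0$; the $\theta$-compatibility hypothesis of Definition \ref{thetacompatible} forces this upper bound to $0$, whence $f(N_\varepsilon(t))/f(h_t)\to 0$, i.e.\ $(x_k)\in S_\theta^f$.

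The main obstacle, if there is one, is being clean about the order of quantifiers in the second inclusion: one must fix $\varepsilon$ (defining $N_\varepsilon(t)$), then choose $\varepsilon'$ small to exploit $\theta$-compatibility, and only after taking the $\limsup$ in $t$ let $\varepsilon'\to 0$. The first inclusion is essentially automatic from the same trick used in Theorem \ref{proposicion}, so the whole argument is short once the two densities $d_\theta$ and $d_{f,\theta}$ are compared through the inequalities $f(N_\varepsilon(t))\leq f(\varepsilon'h_t)$ and $f(N_\varepsilon(t))\leq f(h_t/p)$.
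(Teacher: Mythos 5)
Your proposal is correct and follows essentially the same route as the paper: the inclusion $S_\theta\subset S_\theta^f$ is obtained exactly as in the paper's proof, by combining $N_\varepsilon(t)\leq \varepsilon' h_t$ with monotonicity of $f$ and then invoking $\theta$-compatibility to make $\varphi_\theta(\varepsilon')$ small. The only difference is cosmetic: you spell out the easy inclusion $S_\theta^f\subset S_\theta$ (via the subadditivity trick of Theorem \ref{proposicion}), which the paper simply cites as well known, and you phrase the compatibility step as a $\limsup$ bound followed by $\varepsilon'\to 0$ rather than choosing $\varepsilon'$ in response to a prescribed $\varepsilon$.
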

\begin{proof}
It well know that for any modulus function $S_\theta^f\subset S_\theta$.
Let us fix $\varepsilon>0$ arbitrarily small. Since $f$ is $\theta$-compatible, there exist $\varepsilon' >0$  and $r_0(\varepsilon)$ such that if $h_r\geq r_0$ then
$$
\frac{f(h_r\varepsilon')}{f(h_r)}<\varepsilon.
$$

Let $c>0$ and let us fix $\varepsilon'>0$. Since $(x_n)$ is lacunary statistically convergent to $L$ then there exists $r_1(\varepsilon)$ (since $\varepsilon'$ depends  on $\varepsilon$ we have that $r_1$ depends actually on $\varepsilon$) such that if $h_r\geq r_1$:

$$
\#\{k\in I_r\,:\, \|x_k-L\|>c\}\leq h_{r}\varepsilon'.
$$

 Since $f$ is increasing we have
$$
\frac{f(\#\{ k\in I_r\,\,:\,\|x_k-L\|> c\})}{f(h_r)}\leq
\frac{f(h_r\varepsilon')}{f(h_r)}<\varepsilon.
$$
for all $h_t\geq \max\{r_0,r_1\}$, which gives the desired result. 
\end{proof}

\begin{theorem}
\label{propo2}
Let $f$ be a $\theta$-compatible modulus function and $\theta=(k_r)$ a lacunary sequence. Then, $N_f^\theta=N_\theta$. 
 \end{theorem}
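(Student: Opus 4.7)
The plan is to prove the two inclusions. One direction, $N_\theta^f \subset N_\theta$, is already covered by Theorem \ref{proposicion} (which requires no assumption on $f$ beyond being a modulus function). So the real content is the reverse inclusion $N_\theta \subset N_\theta^f$ under the hypothesis that $f$ is $\theta$-compatible. The strategy mirrors that of Theorem \ref{propo1}: we exploit $\theta$-compatibility to swap a smallness assumption on the Cesàro averages for a smallness assumption on $f$ applied to the sum.

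Concretely, I would fix $(x_n) \in N_\theta$ converging to $L$, so that $\frac{1}{h_t}\sum_{k \in I_t}\|x_k - L\| \to 0$. Given $\varepsilon > 0$, I first invoke $\theta$-compatibility to select $\varepsilon' > 0$ with $\varphi_\theta(\varepsilon') < \varepsilon$, so there exists $t_0$ such that $f(h_t\varepsilon')/f(h_t) < \varepsilon$ for every $t \geq t_0$. Next, using the lacunary strong Cesàro convergence with this specific $\varepsilon'$, there exists $t_1$ such that
$$
\sum_{k \in I_t}\|x_k - L\| \leq h_t \varepsilon'
\qquad \text{for all } t \geq t_1.
$$
Since $f$ is increasing, applying $f$ to both sides and dividing by $f(h_t)$ yields
$$
\frac{f\!\left(\sum_{k \in I_t}\|x_k - L\|\right)}{f(h_t)}
\leq \frac{f(h_t \varepsilon')}{f(h_t)} < \varepsilon
$$
for all $t \geq \max\{t_0, t_1\}$. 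As $\varepsilon > 0$ was arbitrary, this gives $(x_n) \in N_\theta^f$.

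There is essentially no obstacle here: the argument is a near-verbatim adaptation of the proof of Theorem \ref{propo1}, replacing the cardinality $\#\{k \in I_r : \|x_k - L\| > c\}$ by the partial sum $\sum_{k \in I_t}\|x_k - L\|$. The only subtlety worth flagging is the order of quantifiers — one must pick $\varepsilon'$ from $\theta$-compatibility \emph{first} (depending only on $\varepsilon$) and only then feed that $\varepsilon'$ into the hypothesis $(x_n) \in N_\theta$ to produce $t_1$; this keeps $t_1$ legitimately a function of $\varepsilon$ alone.
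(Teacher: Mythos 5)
Your proposal is correct and follows essentially the same route as the paper: the inclusion $N_\theta^f\subset N_\theta$ is delegated to Theorem \ref{proposicion}, and the reverse inclusion is obtained by choosing $\varepsilon'$ from $\theta$-compatibility first, feeding it into the lacunary strong Ces\`aro hypothesis, and then applying the monotonicity of $f$. Your explicit handling of the order of quantifiers is in fact slightly more careful than the paper's own write-up, which compresses this step into ``by applying the same argument as above.''
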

\begin{proof} Let us suppose that $(x_n)$ is lacunary strong Ces\`aro convergent to $L$. Then, for any $\varepsilon'>0$ there exists $r_0$  such that if  $h_r\geq r_0$ then
$$
\sum_{k\in I_r} \|x_k-L\|\leq h_r \varepsilon'
$$
and since $f$ is increasing, then
$$
f\left(\sum_{k\in I_r} \|x_k-L\|\right)\leq f( h_r \varepsilon')
$$
thus
$$
\frac{f\left(\sum_{k\in I_r} \|x_k-L\|\right)}{f(h_r)}\leq \frac{f( h_r\varepsilon')}{f(h_r)}
$$
then by applying the same argument as above we get the desired result.
\end{proof}

\begin{theorem} Let $f$ be a modulus function and $\theta=(k_r)$ a lacunary sequence.
\label{reciproco}
\begin{enumerate}
    \item If $S_\theta=S^f_\theta$  then $f$ must be $\theta$-compatible.
    \item If  $N_\theta=N_\theta^f$  then $f$ must be $\theta$-compatible.
\end{enumerate}
\end{theorem}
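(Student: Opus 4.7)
The plan is to prove both parts simultaneously by contrapositive, producing a single sequence $(x_n)$ that lies in $N_\theta \cap S_\theta$ (with limit $0$) but in neither $N_\theta^f$ nor $S_\theta^f$ whenever $f$ fails to be $\theta$-compatible. The sequence will be $0/1$-valued, which collapses the statistical and strong-Ces\`aro quantities on each $I_r$ to the same number, so one construction handles both statements at once.

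First I would observe that $\varphi_\theta$ is non-decreasing in $\varepsilon$, since $f$ is increasing. Hence $\lim_{\varepsilon\to 0^+}\varphi_\theta(\varepsilon)$ exists in $[0,\infty]$, and failure of $\theta$-compatibility means this limit equals some $\delta>0$; consequently $\varphi_\theta(\varepsilon)\ge \delta$ for every $\varepsilon>0$. Unfolding the definition of $\limsup$, for each $j\ge 1$ one can select an index $t_j$ with
\[
\frac{f(h_{t_j}/j)}{f(h_{t_j})}\ge \frac{\delta}{2},
\]
and these can be chosen strictly increasing.

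Next I would choose, for each $j$, a subset $E_j\subset I_{t_j}$ with $|E_j|=m_j:=\lceil h_{t_j}/j\rceil$, and define $x_k=1$ if $k\in\bigcup_j E_j$ and $x_k=0$ otherwise, with target limit $L=0$. On an interval $I_r$ with $r\notin\{t_j\}$ both $\sum_{k\in I_r}|x_k|$ and $\#\{k\in I_r:|x_k|>c\}$ vanish, while on $I_{t_j}$ both equal $m_j\le h_{t_j}/j+1$. Therefore
\[
\frac{1}{h_r}\sum_{k\in I_r}|x_k|\le \frac{1}{j}+\frac{1}{h_{t_j}}\quad\text{and}\quad \frac{1}{h_r}\#\{k\in I_r:|x_k|>c\}\le \frac{1}{j}+\frac{1}{h_{t_j}},
\]
which tend to $0$ as $r\to\infty$, showing $(x_n)\in N_\theta\cap S_\theta$ with limit $0$. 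On the other hand, along the subsequence $r=t_j$,
\[
\frac{f\bigl(\sum_{k\in I_r}|x_k|\bigr)}{f(h_r)}=\frac{f(m_j)}{f(h_{t_j})}\ge \frac{f(h_{t_j}/j)}{f(h_{t_j})}\ge \frac{\delta}{2},
\]
and the same lower bound holds for $f(\#\{k\in I_r:|x_k|>c\})/f(h_r)$ for any $c\in(0,1)$. So neither ratio tends to $0$, hence $(x_n)$ is not convergent to $0$ in $N_\theta^f$ or $S_\theta^f$. Because $N_\theta^f\subset N_\theta$ (Theorem~\ref{proposicion}) and $S_\theta^f\subset S_\theta$, and because the $N_\theta$/$S_\theta$ limit is unique, $(x_n)$ has no limit at all in either modulated space, contradicting $N_\theta=N_\theta^f$ or $S_\theta=S_\theta^f$.

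The only delicate step is the extraction of $t_j$ and the bookkeeping with the ceiling: one must ensure $m_j\ge h_{t_j}/j$ so that monotonicity of $f$ preserves the lower bound $\delta/2$, and simultaneously $m_j/h_{t_j}\to 0$ so that the unmodulated averages still vanish. Both hold automatically once $j\le h_{t_j}$, which may be arranged by passing to a tail of the $t_j$, so no serious obstacle arises.
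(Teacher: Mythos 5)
Your proposal is correct and follows essentially the same route as the paper: assume $f$ is not $\theta$-compatible, use monotonicity of $\varphi_\theta$ to get a uniform lower bound $\delta>0$, extract blocks $I_{t_j}$ with $f(h_{t_j}\varepsilon_j)\ge \tfrac{\delta}{2} f(h_{t_j})$, and plant a $0/1$-valued indicator sequence on subsets of size about $h_{t_j}\varepsilon_j$ so that the unmodulated averages vanish while the modulated ones stay bounded below. Your explicit remark that the $0/1$ structure makes the statistical count and the Ces\`aro sum coincide (so one construction settles both parts), and your appeal to uniqueness of limits to rule out convergence to any other $L$, are slightly more carefully spelled out than in the paper, but the construction is the same.
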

\begin{proof}

Assume that $f$ is not a $\theta$-compatible modulus function.
Let us construct on the normed space $X=\mathbb{R}$ a sequence $(x_n)\in S_\theta\setminus S_\theta^f$. An easy modification provides a counterexample on any normed space. 

Since $\varphi_{\theta}(\varepsilon)$ is an increasing function, if $f$ is not $\theta$-compatible then  there exists $c>0$ such that for any $\varepsilon>0$:  $\varphi_\theta(\varepsilon)>c$.

Let us fix $\varepsilon_k\to 0$. Thus, for each $k$ there exists $h_{r_k}$ such that $f(h_{r_k}\varepsilon_k)\geq cf(h_{r_k})$. Moreover, since $h_{r}$ is increasing we can suppose that a
\begin{equation}
    \label{lacudesigualdad}
    h_{r_k}(1-\varepsilon_k)-1>0.
\end{equation} 
Let us denote by $\lfloor x\rfloor$ the integer part of $x$.  We set $n_k=\lfloor h_{r_k}\varepsilon_k \rfloor+1$. According to equation (\ref{lacudesigualdad}), we get that $h_{r_k}-n_k>0$. Let us define the subset $A_k=[k_{r_k}-n_k,k_{r_k}]\cap \mathbb{N}\subset I_{r_k}$, and $A=\bigcup_kA_k$. If we denote by $\chi_A(\cdot)$ the characteristic function of $A$, we claim that the sequence $x_n=\chi_A(n)$, is $\theta$-statistically convergent to $0$ but not $f$-statistically convergent, a contradiction.

Indeed, if $r\neq r_k$ for any $k$,   then
$$
 \frac{\#\{l\in I_{r}\,:\, |x_l|>\varepsilon\}}{h_{r}} \leq \frac{0}{h_{r}}=0.
$$
And for $r=r_k$:
$$
 \frac{\#\{l\in I_{r_k}\,:\, |x_l|>\varepsilon\}}{h_{r_k}}= \frac{n_k}{h_{r_k}}\to 0
$$
as $k\to \infty$. 

On the other hand, 
$$
 \frac{f(\#\{l\in I_{r_k}\,:\, |x_l|>\varepsilon\})}{f(h_{r_k})}= \frac{f(n_k)}{f(h_{r_k})}\geq \frac{f(h_{r_k}\varepsilon_k)}{f(h_{r_k})}\geq c
$$
which yields part (1).
To show part (2), let us see that if $f$ is not $\theta$-compatible the sequence $(x_n)$ constructed before satisfies that $(x_n)\in N_\theta$ but $(x_n)\notin N_\theta^f$ a contradiction.

\end{proof}

\section{Lacunary $f$-strong Cesàro convergence vs Lacunary $f$-statistical convergence}
\label{cuatro}
In this section we will obtain Connor-Khan-Orhan's type results by relating
the lacunary $f$ strong Cesàro convergence ($N_\theta^f$) with the lacunary $f$-statistical convergence ($S_\theta^f$).
\begin{theorem}
Assume that $\theta=(k_t)$ is a lacunary sequence and $f$ is a modulus function. Then $N_\theta^f\subset S_\theta^f$.
\end{theorem}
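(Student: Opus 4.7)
The plan is to carry out the standard Connor-style argument, adapted to the modulus-function setting. Fix $L$ with $(x_n)\in N_\theta^f$ and fix $\varepsilon>0$. Write $S_t=\sum_{k\in I_t}\|x_k-L\|$ and $K_t(\varepsilon)=\{k\in I_t:\|x_k-L\|>\varepsilon\}$. The first step is the elementary counting bound
$$
S_t\;\geq\;\sum_{k\in K_t(\varepsilon)}\|x_k-L\|\;\geq\;\varepsilon\,\#K_t(\varepsilon),
$$
so that $\#K_t(\varepsilon)\leq S_t/\varepsilon$. Since $f$ is increasing, this gives
$$
f(\#K_t(\varepsilon))\;\leq\;f(S_t/\varepsilon).
$$

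The next step, which I expect to be the only delicate point, is to trade the factor $1/\varepsilon$ inside $f$ for a constant multiple outside. For this I would pick an integer $m=m(\varepsilon)\geq 1/\varepsilon$ and use the fact that subadditivity of a modulus function implies $f(mu)\leq m\,f(u)$ for every $u\geq 0$ and every positive integer $m$ (an immediate induction from condition~(2) of the definition). Then, since $f$ is increasing,
$$
f(S_t/\varepsilon)\;\leq\;f(m S_t)\;\leq\;m\,f(S_t).
$$

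Combining the two displays and dividing by $f(h_t)$ yields
$$
\frac{f(\#K_t(\varepsilon))}{f(h_t)}\;\leq\;m\cdot\frac{f(S_t)}{f(h_t)}.
$$
Because $m$ depends only on $\varepsilon$ and not on $t$, letting $t\to\infty$ and using the hypothesis $(x_n)\in N_\theta^f$ forces the left-hand side to tend to $0$. As $\varepsilon>0$ was arbitrary, this is precisely the statement that $(x_n)\in S_\theta^f$, proving $N_\theta^f\subset S_\theta^f$. No hypothesis of compatibility on $f$ is needed; only the modulus axioms (subadditivity plus monotonicity) are used.
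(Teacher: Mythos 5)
Your proposal is correct and follows essentially the same route as the paper: both combine the Markov-type counting bound $\varepsilon\,\#K_t(\varepsilon)\leq S_t$ with monotonicity and the subadditivity consequence $f(mu)\leq m f(u)$, then divide by $f(h_t)$ and let $t\to\infty$. The only cosmetic difference is that the paper first reduces to thresholds $\varepsilon=1/m$ and runs the inequality as a lower bound $f(S_t)\geq\frac{1}{m}f(\#K_t)$, whereas you choose $m\geq 1/\varepsilon$ and bound $f(\#K_t)\leq m f(S_t)$ from above.
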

\begin{proof}
 First of all, in order to prove that $(x_n)$ is lacunary $f$-statistically convergent to $L$, it is sufficient to show that for all $m\in \mathbb{N}$,
\begin{equation}
    \label{ecuacion1}
\lim_{t\to \infty} \frac{f(\#\{k\in I_t\,\,:\,\, \|x_k-L\|>\frac{1}{m}\})}{f(h_t)} =0.
\end{equation}
Indeed, let us fix $\varepsilon>0$ and let us consider $m$ such that $\frac{1}{m+1}\leq \varepsilon\leq \frac{1}{m}$. Then, we get
$$
\#\{k\in I_t\, : \|x_k-L\|>\varepsilon \}\leq \# \left\{k\in I_t\,: \|x_k-L\|>\frac{1}{m+1} \right\},
$$
therefore, since $f$ is increasing
$$
\lim_{t\to \infty} \frac{f\left(\#\{k\in I_t\,: \|x_k-L\|>\varepsilon \}\right)}{f(h_t)} \leq  \lim_{t\to \infty}  \frac{f\left(\#\{k\in I_t\,: \|x_k-L\|>\frac{1}{m+1} \} \right)}{f(h_t)} 
$$
thus taking limits on $t\to \infty$ we obtain what we desired.

Therefore, let $m\in \mathbb{N}$, and let us show equation (\ref{ecuacion1}). 
\begin{align}
\label{des}
    f\left(\sum_{k\in I_t} \|x_k-L\|\right)    & \geq f\left(\sum_{\substack{k\in I_t \\ \|x_k-L\|\geq\frac{1}{m}}} \|x_k-L\|\right) \\
   & \geq f\left(\sum_{\substack{k\in I_t \\ \|x_k-L\|\geq\frac{1}{m}}} \frac{1}{m}\right)
    \geq \frac{1}{m} f\left(\sum_{\substack{k\in I_t \\ \|x_k-L\|\geq\frac{1}{m}}} 1 \right)\\
   &= \frac{1}{m} f\left(\#\left\{k\in I_t\,\,:\,\, \|x-L\|>\frac{1}{m}\right\}\right).
    \end{align}
Since $(x_n)$ is lacunary $f$-strongly Ces\`aro convergent to $L$, we have that
$$
\lim_{t\to \infty} \frac{f\left(\sum_{k\in I_t}\|x_k-L\| \right)}{f(h_t)}=0,
$$
therefore, dividing by $f(h_t)$ equation (\ref{des}), and taking limit as $t\to\infty$, we obtain that for each $m\in \mathbb{N}$ 
$$
\lim_{t\to \infty} \frac{f(\#\{k\in I_t\,\,:\,\, \|x_k-L\|>\frac{1}{m}\})}{f(h_t)} =0
$$
which implies that $(x_n)$  is lacunary $f$-statistically convergent to $L$
 as we desired.
\end{proof}

Statistical convergence implies strong Cesàro convergence in the realm of all bounded sequences. This is the 1988 Connor's result  \cite{connor88}. This result was sharpened by Khan and Orhan \cite{orhan} by proving that a sequence is statistically convergent if and only if it is Strong Cesàro convergent and uniformly integrable.

 In the following statement we can obtain a result analogous to that of Khan and Orhan, which is even new for the lacunary statistical convergence. Here, is crucial to measure optimally the integrability (in a lacunary form) of a sequence.

\begin{definition}
Let $\theta=(k_t)$ be a lacunary sequence. A sequence $(x_n)$ is said to be lacunary uniformly integrable if
$$
\lim_{M\to \infty} \sup_t \sum_{\substack{k\in I_t \\ \|x_k\|\geq M}}     \|x_k\|=0.
$$
\end{definition}
Let us denote by $I_\theta$ the space of all lacunary uniform integrable sequences.
Let us observe that if a sequence $(x_n)$ is bounded then $(x_n)$ is lacunary uniformly integrable, that is, $\ell_\infty(X)\subset I_\theta$. On the other hand, if a sequence $(x_n)$ is uniformly integrable then $(x_n-L)$ is also uniformly integrable for every $L\in X$. 

\begin{theorem}
\label{th3}
Assume that $\theta=(k_r)$ is a lacunary sequence
and   $f$ is a $\theta$-compatible modulus function. Then $S_\theta^f\cap I_\theta\subset N_\theta^f$.  Moreover, if for some modulus function $f$ we have  $S_\theta^f\cap I_\theta\subset N_\theta^f$ then the modulus $f$ must be $\theta$-compatible.
\end{theorem}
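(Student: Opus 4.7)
Given $(x_n)\in S_\theta^f\cap I_\theta$ with $f$-statistical limit $L$, the plan is to estimate $f(\sum_{k\in I_t}\|x_k-L\|)/f(h_t)$ via a three-scale decomposition of each block. Using that translation preserves $I_\theta$, write $y_n=x_n-L$. For parameters $\varepsilon>0$ and $M>0$, split $I_t$ according to whether $\|y_k\|<\varepsilon$, $\varepsilon\leq\|y_k\|<M$, or $\|y_k\|\geq M$, giving
$$
\sum_{k\in I_t}\|y_k\|\leq \varepsilon h_t+M\,n_{t,\varepsilon}+\!\sum_{\|y_k\|\geq M}\!\|y_k\|,
$$
where $n_{t,\varepsilon}:=\#\{k\in I_t:\|y_k\|\geq\varepsilon\}$. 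Choose $M$ so large that the tail is $\leq 1$ uniformly in $t$, which is possible exactly by lacunary uniform integrability applied to $(y_n)$. Subadditivity of $f$, together with the trivial consequence $f(Mn)\leq\lceil M\rceil f(n)$, then yields
$$
\frac{f\bigl(\sum_{k\in I_t}\|y_k\|\bigr)}{f(h_t)}\leq \frac{f(\varepsilon h_t)}{f(h_t)}+\lceil M\rceil\frac{f(n_{t,\varepsilon})}{f(h_t)}+\frac{f(1)}{f(h_t)},
$$
and passing to $\limsup_t$ kills the second term (by $(x_n)\in S_\theta^f$) and the third (since $f(h_t)\to\infty$), leaving $\varphi_\theta(\varepsilon)$ as the only surviving bound. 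Letting $\varepsilon\to 0^+$ and invoking $\theta$-compatibility of $f$ drives this to $0$, so $(x_n)\in N_\theta^f$.

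\textbf{Converse.} For the second assertion I argue contrapositively: if $f$ is not $\theta$-compatible, then $\alpha:=\lim_{\varepsilon\to 0^+}\varphi_\theta(\varepsilon)>0$, and by monotonicity of $\varphi_\theta$ we have $\varphi_\theta(\varepsilon)\geq\alpha$ for every $\varepsilon>0$. Pick a strictly increasing $(r_k)$ with $f(h_{r_k}/k)/f(h_{r_k})\geq\alpha/2$ and define
$$
x_n=\tfrac{1}{k}\text{ if }n\in I_{r_k},\qquad x_n=0\text{ otherwise.}
$$
Then $(x_n)$ is bounded, hence in $\ell_\infty(X)\subset I_\theta$; for any fixed $\varepsilon>0$ the set $\{k\in I_t:|x_k|>\varepsilon\}$ is empty once $t>r_{\lceil 1/\varepsilon\rceil}$, so $(x_n)\in S_\theta^f$ with limit $0$; yet at $t=r_k$ the block sum equals $h_{r_k}/k$, giving $f(\sum_{k\in I_{r_k}}|x_k|)/f(h_{r_k})\geq\alpha/2$, so the $N_\theta^f$ limit $0$ fails. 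A candidate limit $L\neq 0$ is excluded by the eventual lower bound $\sum_{k\in I_t}|x_k-L|\geq h_t|L|/2$, whose $f$-ratio has $\limsup\geq\varphi_\theta(|L|/2)\geq\alpha$. Hence $(x_n)\in(S_\theta^f\cap I_\theta)\setminus N_\theta^f$, contradicting the assumed inclusion.

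\textbf{Main obstacle.} The delicate step I foresee is balancing the three requirements on the counterexample: non-$\theta$-compatibility supplies only the single inequality $f(h_{r_k}/k)\geq(\alpha/2)f(h_{r_k})$, and it must be converted to a failure of $N_\theta^f$ without disturbing lacunary uniform integrability or $f$-statistical convergence. The resolution is that spreading $1/k$ uniformly over the entire block $I_{r_k}$ is essentially forced: concentrating the same total mass on a smaller subset of $I_{r_k}$ would make the statistical count too large for $S_\theta^f$, while pushing individual values past a bounded range would exit $I_\theta$.
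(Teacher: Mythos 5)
Your proposal is correct and follows essentially the same route as the paper: the same three-range decomposition of each block ($<\varepsilon$, $[\varepsilon,M)$, $\geq M$) controlled respectively by $\theta$-compatibility, lacunary $f$-statistical convergence, and lacunary uniform integrability, differing only in bookkeeping (a $\limsup$-then-$\varepsilon\to 0$ argument instead of the paper's explicit $\varepsilon/3$ thresholds). The converse counterexample is also the paper's: the sequence equal to a constant $\varepsilon_k$ (here $1/k$) on the whole block $I_{r_k}$ and $0$ elsewhere, with the blocks chosen so that $f(h_{r_k}\varepsilon_k)\geq c\,f(h_{r_k})$.
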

\begin{proof}
Assume that  $(x_n) $ is a  sequence  such that $(x_n) $ is   lacunary $f$-statistically convergent to $L$ and lacunary uniformly integrable.
Let us consider $\varepsilon > 0$.
Since $f$ is $\theta$-compatible there exists $\varepsilon'>0$ such that 
\begin{equation}
\label{desigualdad1}
\frac{f\left(h_t\varepsilon'\right)}{f(h_t)}<\frac{\varepsilon}{3}
\end{equation}
for all $t\geq t_0(\varepsilon)$. 

Now, since $(x_n)$ is lacunary uniformly integrable, there exists a natural number $M>0$ large enough  satisfying $\frac{1}{M}<\varepsilon'$ and for all $t\in\mathbb{N}$
\begin{equation}
    \label{desigualdad2}
  \frac{1}{h_t}  \sum_{\substack{k\in I_t \\ \|x_k-L\|\geq M}}\|x_k-L\|< \varepsilon'.
\end{equation}
And since $(x_n)$ is lacunary $f$-statistically convergent to $L$, there exists a natural number, which we abusively denote  by $t_0(\varepsilon)$, such that for all $t \geq t_0(\varepsilon)$:
\begin{equation}
\label{desigualdad3}
    \frac{1}{f(h_t)} f(\#\{ k\in I_t\,:\,\,\|x_k-L\|>\varepsilon'\})< \frac{\varepsilon}{3M}.
\end{equation}

Therefore
\begin{eqnarray}
\label{tres}
   \frac{f\left(\sum_{k\in I_t}\|x_k-L\| \right)}{f(h_t)}  \leq \frac{1}{f(h_t)} f\left( \sum_{\substack{k\in I_t \\ M> \|x_k-L\|\geq\varepsilon'}} \|x_k-L\| \right)+ \nonumber \\  \frac{1}{f(h_t)}  f\left( \sum_{\substack{k\in I_t \\ \|x_k-L\|\geq M }} \|x_k-L\| \right) + \nonumber \\
   \frac{1}{f(h_t)}  f\left( \sum_{\substack{k\in I_t \\ \|x_k-L\|<\varepsilon'}} \|x_k-L\| \right)
\end{eqnarray}

Since $f$ is increasing, according to (\ref{desigualdad3}) we get that for all $t\geq t_0(\varepsilon)$ the first term of (\ref{tres}) is:
\begin{align}
\label{d1}
    \frac{1}{f(h_t)} f\left( \sum_{\substack{k\in I_t \\ M> \|x_k-L\|\geq\varepsilon'}} \|x_k-L\| \right) &<   \frac{f\left( \#\{k\in I_t \,\,:\,\,\|x_k-L\|>\varepsilon'\}\cdot M\right)}{f(h_t)}  \nonumber\\
    &\leq M \frac{1}{f(h_t)} f\left( \#\{k\in I_t\,\,:\,\,\|x_k-L\|>\varepsilon'\} \right)\nonumber  \\
    &< M\frac{\varepsilon}{3M}=\frac{\varepsilon}{3}.
\end{align}

On the other hand, let us estimate the second summand of the inequality (\ref{tres}). Using that $f$ is increasing and by applying firstly the  inequality (\ref{desigualdad2}) and later inequality (\ref{desigualdad1}) we have that for $t\geq t_0(\varepsilon)$:

\begin{align}
\label{d2}
    \frac{1}{f(h_t)}  f\left( \sum_{\substack{k\in I_t \\ \|x_k-L\|\geq M }} \|x_k-L\| \right) &=\frac{1}{f(h_t)}  f\left( h_t\frac{1}{h_t}\sum_{\substack{k\in I_t \\ \|x_k-L\|\geq M }} \|x_k-L\| \right) \nonumber \\
    &\leq \frac{1}{f(h_t)}  f\left( h_t \varepsilon' \right) \leq \frac{\varepsilon}{3}. 
\end{align}

Finally, for the third summand in (\ref{tres}) by applying inequality (\ref{desigualdad1}) we obtain that if $t\geq t_0(\varepsilon)$:
\begin{equation}
    \label{d3}
    \frac{1}{f(h_t)} f\left( \sum_{\substack{k\in I_t \\ \|x_k-L\|\leq\varepsilon'}} \|x_k-L\| \right)
 \leq \frac{1}{f(h_t)}  f\left(h_t\frac{1}{M}  \right)<\frac{\varepsilon}{3}.
\end{equation}

Thus, by using   inequalities (\ref{d1}),(\ref{d2}), and (\ref{d3}) into inequality (\ref{tres}) we obtain that if $t\geq t_0(\varepsilon)$
$$
\frac{f\left(\sum_{k\in I_t}\|x_k-L\| \right)}{f(h_t)}  \leq \varepsilon,
$$
that is, $(x_n)$ is lacunary $f$-strong Ces\`aro convergent to $L$ as we desired.

For the converse, assume that $f$ is not $\theta$-compatible.
We can construct without loss a counterexample normed space $X=\mathbb{R}$. Since $f$ is not $\theta$-compatible given $(\varepsilon_k)$ a decreasing sequence converging to zero, there exists a subsequence $r_k$ such that $f(h_{r_k}\varepsilon_k)\geq cf(h_{r_k})$, for some $c>0$.
If we consider the sequence $x_n=\sum_{k=1}^\infty \varepsilon_k\chi_{(k_{r_k}-1,k_{r_k}]}(n)$. 
The sequence $(x_n)$ is clearly bounded and since $\varepsilon_k$ is decreasing an easy check show that $(x_n)$ is lacunary $f$-statistically convergent to $0$.
On the other hand:
$$
\frac{1}{f(r_k)}f\left(\sum_{n\in I_{r_k}} |x_n|\right)=\frac{f(h_{r_{k}}\varepsilon_k)}{f(h_{r_k})}\geq c
$$
which proves that $(x_n)$ is not lacunary $f$-strong Cesàro convergent, as we desired to prove.

\end{proof}

\section{Lacunary $f$-strong Cesàro convergence vs lacunary strong Cesàro convergence.}
\label{cinco}
Now, let us fix a modulus function $f$ and we study the relationship between the lacunary $f$-strong Cesàro convergence and the $f$-strong Cesàro convergence in terms of the properties of a lacunary sequence $\theta$. 
Let $f$ be a modulus function and $\theta=(k_r)$ a lacunary sequence. 
Let us denote by $N_\theta^f$ the space of all lacunary $f$-strong Cesàro convergent sequences in the normed space $X$. And we denote by $N^f$ the space of all $f$-strong Cesàro convergent sequences in the normed space $X$. When $f(x)=x$, then $N^x=N$ will denote the space of strong Cesàro convergent sequences.

\begin{proposition}
Let $f$ be a modulus function and let $\theta=(k_r)$ be a lacunary sequence.
\begin{description}
    \item[i)] If $1<\liminf q_r$ then $N^f\subset N_\theta^f$.
    \item[ii)] Assume that  $f$ is compatible.  If $N^f\subset N_\theta=N_\theta^f$ then $1<\liminf q_r$.
\end{description}
\end{proposition}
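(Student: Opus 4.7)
For (i) we set $S_n=\sum_{k=1}^n\|x_k-L\|$ and observe $\sum_{k\in I_t}\|x_k-L\|\le S_{k_t}$. Monotonicity of $f$ then gives
$$
\frac{f\bigl(\sum_{k\in I_t}\|x_k-L\|\bigr)}{f(h_t)}\le\frac{f(S_{k_t})}{f(k_t)}\cdot\frac{f(k_t)}{f(h_t)}.
$$
The first factor is a subsequence of $f(S_n)/f(n)$ and tends to $0$ by hypothesis. The assumption $1<\liminf q_r$ provides $\delta>0$ with $q_r\ge 1+\delta$ eventually, so $k_r/h_r\le C:=(1+\delta)/\delta$. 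Monotonicity plus the subadditivity inequality $f(mx)\le mf(x)$ for $m\in\mathbb{N}$ then yields $f(k_r)\le f(\lceil C\rceil h_r)\le\lceil C\rceil f(h_r)$, so the second factor is bounded and the product tends to $0$; this gives $(x_n)\in N_\theta^f$.

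For (ii) we argue by contrapositive: assuming $\liminf q_r=1$, we build a sequence in $N^f\setminus N_\theta^f$. Since $h_r/k_r=1-1/q_r$ accumulates at $0$, an inductive selection yields a subsequence $(r_j)$ satisfying both $h_{r_j}/k_{r_j}\le 1/j^2$ and $k_{r_j-1}\ge j\sum_{i<j}h_{r_i}$; both conditions are simultaneously feasible because, among the indices $r$ with $h_r/k_r$ small, $k_{r-1}$ can be taken arbitrarily large. On $X=\mathbb{R}$ we define $x_n=\chi_A(n)$ with $A=\bigcup_j I_{r_j}$; writing $S_n=\sum_{k=1}^n x_k$, the two conditions combine to give $S_n/n\le 2/j$ for $n\ge k_{r_j-1}$, so $(x_n)\in N$. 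Compatibility of $f$ upgrades this to $(x_n)\in N^f$ (by the non-lacunary analogue of Theorem~\ref{propo2}, with $[1,n]$ replacing $I_t$ and $\varphi$ replacing $\varphi_\theta$: for any $\eta>0$ choose $\varepsilon$ with $\varphi(\varepsilon)<\eta$ and use $S_n\le\varepsilon n$ for large $n$). On the other hand $(x_n)\notin N_\theta$, because the block-average $\frac{1}{h_t}\sum_{k\in I_t}|x_k-L|$ equals $|1-L|$ for $t=r_j$ and $|L|$ for every other $t$, so no common $L$ makes both vanish. Since compatibility also yields $N_\theta=N_\theta^f$ by Theorem~\ref{propo2}, we conclude $(x_n)\in N^f\setminus N_\theta^f$, contradicting the hypothesis of (ii).

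The main obstacle is the bookkeeping behind $S_n/n\to 0$: the bound must hold uniformly in $n$, not merely along the distinguished indices $n=k_{r_j}$. The condition $h_{r_j}/k_{r_j}\le 1/j^2$ handles the contribution of the current selected block when $n\in I_{r_j}$, whereas $k_{r_j-1}\ge j\sum_{i<j}h_{r_i}$ controls the mass accumulated from all earlier selected blocks. Together they produce $S_n/n=O(1/j)$ as soon as $n\ge k_{r_j-1}$, which is precisely the uniform decay needed to place $(x_n)$ in $N$ (and hence, via compatibility, in $N^f$).
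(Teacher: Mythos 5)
Your proof is correct. Part (i) is essentially the paper's argument: both reduce the problem to the boundedness of $f(k_r)/f(h_r)$ when $\liminf q_r>1$; the paper bounds $f\bigl(\sum_{k\in I_r}\|x_k-L\|\bigr)$ by the two full Ces\`aro sums up to $k_r$ and $k_{r-1}$, whereas you dominate it directly by $f(S_{k_r})$, which is marginally cleaner but the same idea. Part (ii) follows the same overall strategy as the paper --- compatibility collapses $N^f$ to $N$ and $N_\theta^f$ to $N_\theta$, reducing the question to the unmodulated inclusion $N\subset N_\theta$ --- but where the paper at that point simply cites the Freedman--Sember theorem for the existence of a sequence in $N\setminus N_\theta$ when $\liminf q_r=1$, you reconstruct that counterexample explicitly, with your two selection conditions controlling respectively the mass of the current block and the mass accumulated from earlier blocks; this buys a self-contained proof at the cost of length, and your bookkeeping for $S_n/n\le 2/j$ checks out. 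The only detail worth adding is the one-line observation that, since $S_n/n\to 0$, the set $A$ has density zero and therefore misses infinitely many blocks $I_t$; this is what guarantees that the value $|L|$ is attained along infinitely many $t$ and hence rules out $N_\theta$-convergence to $1$ as well as to $0$.
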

\begin{proof}
To prove i) assume that $\liminf q_r>1$. Thus there exists $\delta>0$ such that 
$1+\delta<q_r$ for all $r$.
Let $(x_n)\in N^f$, such that  $x_n\overset{N^f}{\longrightarrow}L$ for some $L\in X$, we will show that  $x_n\overset{N^f_\theta}{\longrightarrow}L$. Indeed, set 
$$
\tau_r=\frac{1}{f(h_r)}f\left(\sum_{k\in I_r} \|x_k-L\|\right).
$$
Then
\begin{eqnarray*}
    \tau_r&=&\frac{1}{f(h_r)}f\left(\sum_{k=k_{r-1}-1}^{k_r} \|x_k-L\|\right)\\
    &\leq & \frac{1}{f(h_r)}f\left(\sum_{k=1}^{k_r} \|x_k-L\|\right) +\frac{1}{f(h_r)}f\left(\sum_{k=1}^{k_{r-1}} \|x_k-L\|\right) \\
    &=&  \frac{f(k_r)}{f(h_r)}\frac{1}{f(k_r)}f\left(\sum_{k=1}^{k_r} \|x_k-L\|\right)+ \frac{f(k_{r-1})}{f(h_r)}\frac{1}{f(k_{r-1})}f\left(\sum_{k=1}^{k_{r-1}} \|x_k-L\|\right)\\
    &\leq & C \left[\frac{1}{f(k_r)}f\left(\sum_{k=1}^{k_r} \|x_k-L\|\right)+ \frac{1}{f(k_{r-1})}f\left(\sum_{k=1}^{k_{r-1}} \|x_k-L\|\right)  \right],
\end{eqnarray*}
where the last inequality follows from the fact that  $\frac{k_r}{h_r}\leq \frac{1+\delta}{\delta}$ and $\frac{k_{r-1}}{h_r}<\frac{1}{\delta}$ and using that $f$ is increasing, we get that $\frac{f(k_r)}{f(h_r)}$ and $\frac{f(k_{r-1})}{f(h_r)}$ are bounded. The result follows taking limits as $r\to \infty$.

We prove ii)  by  contradiction. Let us suppose  that $\liminf q_r=1$, and we will construct a sequence $(x_n)\in N^f$ such that $(x_n)\notin N^f_\theta$.
Indeed, since $\liminf q_r=1$ by Freedman-Sember's result, there exists $(x_n)\in N$ but $(x_n)\notin N_\theta$.
However, since $f$ is compatible, in particular $f$ is   $\theta$-compatible, therefore $N_\theta^f=N_\theta$. On the other hand, since $f$ is compatible $N=N^f$.  Hence, there exists $(x_n)\in N^f=N$ such that $(x_n)\notin N_\theta^f=N_\theta$ as we desired to prove.

\end{proof}

\begin{proposition}
Let $f$ be a  modulus function and let $\theta=(k_r)$  be a lacunary sequence. 
\begin{enumerate}
    \item Assume that $f$ is compatible. If $\limsup q_r<\infty$ then $N_\theta^f\subset N^f$.
    \item Assume that $f$ is $\theta$-compatible. If  $N_\theta^f\subset N^f$ then  $\limsup q_r<\infty$.
\end{enumerate}
\end{proposition}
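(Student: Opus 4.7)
The plan is to follow the same reduction scheme used in the proof of the preceding proposition, chaining together Theorem \ref{propo2}, the classical one-sided Freedman--Sember characterization of $N_\theta$ versus $N$, and the non-lacunary analogue $N^f \subset N$ of Theorem \ref{proposicion}.

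For part (1) I would argue by direct inclusion. Since $f$ is compatible it is $\theta$-compatible, so Theorem \ref{propo2} gives $N_\theta^f = N_\theta$. Next, the classical Freedman--Sember result asserts that $\limsup q_r < \infty$ forces $N_\theta \subset N$. Finally, the non-lacunary counterpart obtained in \cite{jia}, namely that $f$ compatible implies $N = N^f$, closes the chain and yields $N_\theta^f = N_\theta \subset N = N^f$.

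For part (2) I would proceed by contraposition, assuming $\limsup q_r = \infty$. By the Freedman--Sember construction one produces $(x_n) \in N_\theta \setminus N$: pick a subsequence $r_k$ with $q_{r_k} \to \infty$ (eventually $q_{r_k} \geq 2$) and take $x_n$ to be the characteristic function of $\bigcup_k (k_{r_k-1}, 2k_{r_k-1}]$; the lacunary Cesàro means on $I_{r_k}$ equal $k_{r_k-1}/h_{r_k} = 1/(q_{r_k}-1) \to 0$, while at $n = 2k_{r_k-1}$ the ordinary Cesàro mean is at least $1/2$. Theorem \ref{propo2} then upgrades this to $(x_n) \in N_\theta^f = N_\theta$. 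On the other hand, running the same argument as in Theorem \ref{proposicion} without any lacunary grouping shows that $N^f \subset N$ holds for every modulus $f$, so $(x_n) \notin N^f$. This gives $(x_n) \in N_\theta^f \setminus N^f$, contradicting $N_\theta^f \subset N^f$.

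The subtle point worth highlighting is the asymmetry in hypotheses: part (1) truly needs full compatibility of $f$ to secure $N = N^f$ at the last link of the chain, whereas part (2) needs only $\theta$-compatibility because the inclusion $N^f \subset N$ holds for every modulus with no compatibility assumption, being a verbatim repetition of the proof of Theorem \ref{proposicion} applied in the non-lacunary setting.
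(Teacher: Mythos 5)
Your proposal is correct and follows essentially the same route as the paper: part (1) is the identical chain $N_\theta^f=N_\theta\subset N=N^f$ via compatibility and Freedman--Sember, and part (2) uses the same Freedman--Sember counterexample, the only (harmless) difference being that you obtain $(x_n)\in N_\theta^f$ by first checking $(x_n)\in N_\theta$ and invoking Theorem \ref{propo2}, whereas the paper re-runs the $f$-estimates on the lacunary blocks directly. The one detail left implicit in your sketch is that to conclude $(x_n)\notin N$ you must rule out \emph{both} candidate limits: your computation at $n=2k_{r_k-1}$ only excludes the limit $0$, and for a $\{0,1\}$-valued sequence one must also exclude the limit $1$, which the paper does by noting that the ordinary Ces\`aro mean of $\|x_i-x_0\|$ up to $k_{r(j)}$ is at least $\left(1-\tfrac{2}{j}\right)\|x_0\|$.
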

\begin{proof}

Let us shown $i)$. Since $f$ is compatible, in particular $f$ is $\theta$-compatible, therefore $N_\theta^f=N_\theta$.
On the other hand, since $f$ is compatible $N=N^f$. Then, the result follows by applying Freemand-Sember's result (see \cite{sember}). That is, if $1<\liminf q_r$ then $N_\theta=N_\theta^f=\subset N=N^f$ as we desired.

We prove the necessity   by  contradiction. Let us suppose  that $\limsup q_r=\infty$. Following as in Lemma 2 of \cite{sember}, we can select a sequence $k_{r(j)}\subset \theta$ satisfying $q_{r(j)}>j$ and let us consider the sequence
$$
x_k=\begin{cases}
x_0\neq 0 & k_{r(j-1)}<k<2k_{r(j-1)} \quad j=1,2,3,\cdots\\
0 & \textrm{otherwise}.
\end{cases}
$$
We wish to prove that $(x_k)\in N_\theta^f$. Let us fix $\varepsilon>0$.
Since $f$ is $\theta$-compatible, there exists $\varepsilon'>0$ and $t_0$ such that if
$j\geq j_0$  then $\frac{f(h_{r(j)}\varepsilon')}{f(h_{r(j)})}<\varepsilon$.
Since $q(r(j))>j$, we get
$$
\frac{k_{r(j-1)}}{k_{r(j)}-k_{r(j-1)}}=\frac{1}{q(r_j)-1}<\frac{1}{j-1}.
$$
We consider $r(j)$ large enough such that $1/j-1<\varepsilon'$. Thus since $f$ is increasing for $r(j)\geq r(j_0)$ we get
$$
f(k_{r(j-1)})\leq f\left(\frac{1}{j-1}(k_{r(j)}-k_{r(j-1)})\right)<f(\varepsilon' h_{r(j)}).
$$
Dividing by $f(h_{r(j)})$ we get that for $r(j)\geq r(j_0)$
$$
\frac{f(k_{r(j-1)})}{f(h_{r(j)})}<\frac{f(\varepsilon' h_{r(j)})}{f(h_{r(j)})}<\varepsilon.
$$
Therefore for $r(j)\geq r(j_0)$ 
\begin{eqnarray*}
\frac{1}{f(h_{r(j)})}f\left(\sum_{k\in I_{r(j)} }\|x_k\|\right)&=& 
\frac{f(k_{r(j-1)})}{f(h_{r(j)})} \|x_0\|\\
&\leq &\varepsilon \|x_0\|
\end{eqnarray*}
which gives that $(x_k)\in N_\theta^f$.
Now let us see that $(x_k)\notin N^f$. Indeed, since the coordinates of $(x_k)$ are $x_0$ or $0$ the possibles strong limits for $(x_k)$ are $0$ and $x_0$. Let us see that $(x_k)\overset{N^f}{\nrightarrow}x_0$.
Indeed for $i=1,\cdots, k(r(j))$:
\begin{eqnarray*}
\frac{1}{ k_{r(j)}}\sum_{i=1}^{k_{r(j)}} \|x_i-x_0\|&\geq&  \frac{k_{r(j)}-2k_{r(j-1))}}{k_{r(j)}}\|x_0\|\\
&\geq & \left(1-\frac{2}{j}\right)\|x_0\|
\end{eqnarray*}
which does not converge to zero.
Now, let us see that $(x_k)\overset{N^f}{\nrightarrow}0$. Indeed, for $i=1,\cdots, 2k_{r(j-1)}$:
\begin{eqnarray*}
\frac{1}{2k_{r(j-1)}} \sum_{i=1}^{2k_{r(j-1)}} \|x_i\| &\geq &
\frac{k_{r(j-1)}}{2k_{r(j-1)}}\|x_0|\\
&\geq &\frac{\|x_0\|}{2}
\end{eqnarray*}
which does not converge to zero. Therefore $(x_k)\notin N^f$ which gives the desired result.
\end{proof}

\begin{corollary}
Let $f$ be a compatible modulus function and let $\theta=(k_r)$  be a lacunary sequence.
Then $N_\theta^f= N^f$ if and only if  $1<\liminf q_r\leq \limsup q_r<\infty$.
\end{corollary}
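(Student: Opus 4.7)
The plan is to obtain the corollary as an immediate corollary of the two preceding propositions, using the fact (noted after Definition \ref{thetacompatible}) that compatibility implies $\theta$-compatibility for every lacunary sequence $\theta$.

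First I would handle the sufficiency. Assume $1<\liminf q_r\leq \limsup q_r<\infty$. Applying part (i) of the first proposition gives $N^f\subset N_\theta^f$. Since $f$ is compatible, it is in particular $\theta$-compatible, so the hypothesis of part (1) of the second proposition is satisfied; combined with $\limsup q_r<\infty$ this yields $N_\theta^f\subset N^f$. Taken together these inclusions give $N_\theta^f=N^f$.

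For the necessity, suppose $N_\theta^f=N^f$. Using Theorem \ref{propo2} (which applies because compatibility implies $\theta$-compatibility) we have $N_\theta^f=N_\theta$. Thus $N^f\subset N_\theta=N_\theta^f$, so the hypothesis of part (ii) of the first proposition is met and we conclude $1<\liminf q_r$. On the other hand, the identity $N_\theta^f=N^f$ also gives the inclusion $N_\theta^f\subset N^f$, and since $f$ is $\theta$-compatible part (2) of the second proposition yields $\limsup q_r<\infty$. Combining, $1<\liminf q_r\leq \limsup q_r<\infty$.

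There is no real obstacle here since the work has already been done in the two propositions; the only point to be careful about is the need for both compatibility (to invoke Theorem \ref{propo2} and replace $N_\theta^f$ with $N_\theta$ when feeding the hypothesis of Proposition~i(ii) into the necessity argument) and $\theta$-compatibility (to invoke Proposition~ii(2)), both of which follow from the single assumption that $f$ is compatible.
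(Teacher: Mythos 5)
Your proposal is correct and matches the paper's intent exactly: the corollary is stated without proof precisely because it follows by combining part (i) of the first proposition with part (1) of the second for sufficiency, and parts (ii) and (2) for necessity, using Theorem \ref{propo2} (via compatibility implying $\theta$-compatibility) to verify the hypothesis $N^f\subset N_\theta=N_\theta^f$ of part (ii). Your careful tracking of which proposition needs compatibility versus $\theta$-compatibility is exactly the right bookkeeping.
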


\section*{Availability of data and material}
Not applicable.

\section*{Acknowledgements}

\section*{Author's contribution}
The author contributed significantly to analysis and manuscript preparation and she helped perform the analysis with constructive discussions. 
\section*{Competing interests}
The author declare that they have no competing interests.

\section*{Funding}


\begin{bibdiv}
\begin{biblist}

\bib{aizpuru}{article}{
      author={Aizpuru, A.},
      author={List\'{a}n-Garc\'{\i}a, M.~C.},
      author={Rambla-Barreno, F.},
       title={Density by moduli and statistical convergence},
        date={2014},
        ISSN={1607-3606},
     journal={Quaest. Math.},
      volume={37},
      number={4},
       pages={525\ndash 530},
         url={https://doi-org.bibezproxy.uca.es/10.2989/16073606.2014.981683},
      review={\MR{3293586}},
}

\bib{jia0}{article}{
      author={Bhardwaj, Vinod~K.},
      author={Dhawan, Shweta},
       title={{$f$}-statistical convergence of order {$\alpha$} and strong
  {C}es\`aro summability of order {$\alpha$} with respect to a modulus},
        date={2015},
     journal={J. Inequal. Appl.},
       pages={2015:332, 14},
         url={https://doi-org.bibezproxy.uca.es/10.1186/s13660-015-0850-x},
      review={\MR{3411808}},
}

\bib{hindawi}{article}{
      author={Bhardwaj, Vinod~K.},
      author={Dhawan, Shweta},
       title={Density by moduli and lacunary statistical convergence},
        date={2016},
        ISSN={1085-3375},
     journal={Abstr. Appl. Anal.},
       pages={Art. ID 9365037, 11},
         url={https://doi-org.bibezproxy.uca.es/10.1155/2016/9365037},
      review={\MR{3465029}},
}

\bib{canadian}{article}{
      author={Connor, Jeff},
       title={On strong matrix summability with respect to a modulus and
  statistical convergence},
        date={1989},
        ISSN={0008-4395},
     journal={Canad. Math. Bull.},
      volume={32},
      number={2},
       pages={194\ndash 198},
         url={https://doi-org.bibezproxy.uca.es/10.4153/CMB-1989-029-3},
      review={\MR{1006746}},
}

\bib{connor88}{article}{
      author={Connor, J.S.},
       title={The statistical and strong {$p$}-{C}es\`aro convergence of
  sequences},
        date={1988},
        ISSN={0174-4747},
     journal={Analysis},
      volume={8},
      number={1-2},
       pages={47\ndash 63},
      review={\MR{954458}},
}

\bib{Fast}{article}{
      author={Fast, H.},
       title={Sur la convergence statistique},
        date={1951},
     journal={Colloq. Math.},
      volume={2},
       pages={241\ndash 244},
}

\bib{fekete}{article}{
      author={Fekete., M.},
       title={Viszg{\'a}latok a fourier-sorokr{\'o}l (research on fourier
  series)},
        date={1916},
     journal={Math. {\'e}s term{\'e}sz},
      volume={34},
       pages={759\ndash 786},
}

\bib{sember}{article}{
      author={Freedman, Allen~R.},
      author={Sember, John~J.},
      author={Raphael, Marc},
       title={Some {C}es\`aro-type summability spaces},
        date={1978},
        ISSN={0024-6115},
     journal={Proc. London Math. Soc. (3)},
      volume={37},
      number={3},
       pages={508\ndash 520},
         url={https://doi-org.bibezproxy.uca.es/10.1112/plms/s3-37.3.508},
      review={\MR{512023}},
}

\bib{b5}{article}{
      author={Fridy, J.A.},
      author={Orhan, C.},
       title={Lacunary statistical convergence},
        date={1993},
        ISSN={0030-8730},
     journal={Pacific J. Math.},
      volume={160},
      number={1},
       pages={43\ndash 51},
         url={http://projecteuclid.org/euclid.pjm/1102624563},
      review={\MR{1227502}},
}

\bib{orhan}{article}{
      author={Gadjiev, A.~D.},
      author={Orhan, C.},
       title={Some approximation theorems via statistical convergence},
        date={2002},
        ISSN={0035-7596},
     journal={Rocky Mountain J. Math.},
      volume={32},
      number={1},
       pages={129\ndash 138},
         url={https://doi-org.bibezproxy.uca.es/10.1216/rmjm/1030539612},
      review={\MR{1911352}},
}

\bib{hardy}{article}{
      author={Hardy, G.H.},
      author={Littlewood., J.E.},
       title={Sur la s{\'e}rie de fourier d'une fonction {\'a} carr{\'e}
  sommable},
        date={1913},
     journal={CR Acad. Sci. Paris},
      volume={156},
       pages={1307\ndash 1309},
}

\bib{jia}{article}{
      author={Le\'{o}n-Saavedra, Fernando},
      author={List\'{a}n-Garc\'{\i}a, M. del~Carmen},
      author={P\'{e}rez~Fern\'{a}ndez, Francisco~Javier},
      author={Romero de~la Rosa, Mar\'{\i}a~Pilar},
       title={On statistical convergence and strong {C}es\`aro convergence by
  moduli},
        date={2019},
     journal={J. Inequal. Appl.},
       pages={Paper No. 298, 12},
         url={https://doi-org.bibezproxy.uca.es/10.1186/s13660-019-2252-y},
      review={\MR{4036510}},
}

\bib{maddox}{article}{
      author={Maddox, I.J.},
       title={Sequence spaces defined by a modulus},
        date={1986},
        ISSN={0305-0041},
     journal={Math. Proc. Cambridge Philos. Soc.},
      volume={100},
      number={1},
       pages={161\ndash 166},
         url={https://doi-org.bibezproxy.uca.es/10.1017/S0305004100065968},
      review={\MR{838663}},
}

\bib{mursaleenbook}{book}{
      author={Mursaleen., M.},
       title={Applied summability methods},
      series={Springer Briefs in Mathematics},
   publisher={Springer, Cham},
        date={2014},
        ISBN={978-3-319-04608-2; 978-3-319-04609-9},
         url={https://doi-org.bibezproxy.uca.es/10.1007/978-3-319-04609-9},
      review={\MR{3308374}},
}

\bib{nakano}{article}{
      author={Nakano, H.},
       title={Concave modulars},
        date={1953},
        ISSN={0025-5645},
     journal={J. Math. Soc. Japan},
      volume={5},
       pages={29\ndash 49},
         url={https://doi-org.bibezproxy.uca.es/10.2969/jmsj/00510029},
      review={\MR{0058882}},
}

\bib{fisher}{article}{
      author={Pehlivan, S.},
      author={Fisher, B.},
       title={On some sequence spaces},
        date={1994},
        ISSN={0019-5588},
     journal={Indian J. Pure Appl. Math.},
      volume={25},
      number={10},
       pages={1067\ndash 1071},
      review={\MR{1301201}},
}

\bib{Stein}{article}{
      author={Steinhaus, H.},
       title={Sur la convergence ordinaire et la convergence asymptotique},
        date={1951},
     journal={Colloq. Math.},
      volume={2},
       pages={73\ndash 74},
}

\end{biblist}
\end{bibdiv}


\end{document}